\def\IID{\mathop{\mathrm{i.i.d.}}}  % For IID
\def\trace{\mathop{\mathrm{trace}}} % For trace
\def\lhs{\mathop{\mathrm{LHS}}}     % For left-hand side
\def\rhs{\mathop{\mathrm{RHS}}}     % For right-hand side
\def\srd{\mathop{\mathrm{SRD}}}     % For operational
\def\rdf{\mathop{\mathrm{RDF}}}     % For RDF
\def\sdp{\mathop{\mathrm{SDP}}}     % For SDP
\def\mse{\mathop{\mathrm{MSE}}}     % For MSE
\def\rv{\mathop{\mathrm{RV}}}       % For RV
\def\th{\mathop{\mathrm{th}}}       % For i-th
\newtheorem{theorem}{Theorem}%[section]
\newtheorem{lemma}{Lemma}
\newtheorem{remark}{Remark}
\newcommand{\T}{^{\mbox{\tiny T}}}
\newcommand{\be}{\begin{equation}}
\newcommand{\ee}{\end{equation}}
\newcommand{\bea}{\begin{eqnarray}}
\newcommand{\eea}{\end{eqnarray}}
\newcommand{\bes}{\begin{eqnarray*}}
\newcommand{\ees}{\end{eqnarray*}}
\newcommand{\bfi}{\begin{figure}}
\newcommand{\bfit}{\begin{figure}[t]}
\newcommand{\bfib}{\begin{figure}[b]}
\newcommand{\bfih}{\begin{figure}[h]}
\newcommand{\bfip}{\begin{figure}[p]}
\newcommand{\efi}{\end{figure}}
\newcommand{\bi}{\begin{itemize}}
\newcommand{\ei}{\end{itemize}}
\newcommand{\ben}{\begin{enumerate}}
\newcommand{\een}{\end{enumerate}}
\newcommand{\ep}{\end{problem}}
\newcommand{\bx}{{\bf x}}
\newcommand{\by}{{\bf y}}
\newcommand{\bw}{{\bf w}}
\newcommand{\bv}{{\bf v}}
\newcommand{\bq}{{\bf q}}
\newcommand{\bz}{{\bf z}}
\newcommand{\bu}{{\bf u}}
\begin{document}

\sloppy

%% Paper Title
%% You can use linebreaks \\ within to get better formatting as
%% desired. 
\title{The Time-Invariant Multidimensional Gaussian Sequential Rate-Distortion Problem Revisited}

\author{{\IEEEauthorblockN{Photios A. Stavrou, Takashi Tanaka, and Sekhar Tatikonda
\thanks{\IEEEauthorblockA{P. A. Stavrou is with the Department of Electronic Systems at Aalborg University, Denmark}. {\it e-mail:} {\tt fos@es.aau.dk}.}
\thanks{\IEEEauthorblockA{T. Tanaka is with the Department of Aerospace Engineering and Engineering Mechanics at the University of Texas at Austin, TX, USA}. {\it e-mail:} {\tt ttanaka@utexas.edu}.}
\thanks{\IEEEauthorblockA{S. Tatikonda is with the Department of Statistics and Data Science, Yale University, New Haven, CT, USA}. {\it e-mail:} {\tt sekhar.tatikonda@yale.edu}.}
%\thanks{\IEEEauthorrefmark{2}P. A. Stavrou and T. Tanaka have contributed equally in this work.}
}}}

%\markboth{Submitted to IEEE Transactions on Automatic Control}{Stavrou {\it et al.}: The Multidimensional Gaussian Sequential Rate Distortion Problem Revisited}

%% Create the title:
\maketitle

% =====================================================
%
%
% Abstract
%
%
% =====================================================
\begin{abstract}
We revisit the sequential rate-distortion (SRD) trade-off problem for vector-valued Gauss-Markov sources with mean-squared error distortion constraints. 
We show via a counterexample that the dynamic reverse water-filling algorithm suggested by  \cite[eq. (15)]{tatikonda:2004} is not applicable to this problem, and consequently the closed form expression of the asymptotic SRD function derived in \cite[eq. (17)]{tatikonda:2004} is not correct in general. Nevertheless, we show that the multidimensional Gaussian SRD function is semidefinite representable and thus it is readily computable.
\end{abstract}

\begin{IEEEkeywords}
Sequential rate distortion (SRD) function, semidefinite programming (SRD), multidimensional, Gauss-Markov process, counterexample. 
\end{IEEEkeywords}

% =====================================================
%
%
% INTRODUCTION
%
%
% =====================================================
\section{Introduction}
\label{sec:introduction}

\par The sequential rate-distortion (SRD) trade-off problem, formally introduced by Tatikonda {\it et. al.} in \cite[Section IV]{tatikonda:2004} based on the earlier works of Gorbunov and Pinsker \cite{gorbunov-pinsker1972b,gorbunov-pinsker1972a}, can be viewed as a variant of the classical rate-distortion trade-off problem \cite{berger:1971} in which causality constraints are strictly imposed. 
Tatikonda {\it et. al.} also introduced the concept of SRD function, which is defined similarly to the classical rate-distortion function (RDF) with an additional requirement that the reconstructed random process depends on the source random process only in a causal manner.  

\par In \cite[Section IV]{tatikonda:2004}, the authors also studied the operational interpretations of the SRD function in the analysis of zero-delay communication systems.
In particular, it was shown that the SRD function provides a lower bound to the smallest data-rate achievable by the class of zero-delay source codes satisfying the given distortion constraints. 
This result was further exploited to evaluate fundamental performance limitations of feedback control systems over communication channels. 
Derpich and $\O$stergaard in \cite{derpich:2012} showed that the $\srd$ function is a lower bound to both the \emph{operational causal RDF} and the \emph{operational zero-delay RDF}\footnote{See, e.g., \cite{neuhoff:1982,linder:2014,stavrou:2017,wood:2017} for the definitions of causal and zero-delay $\rdf$ functions.}. Moreover, they showed that this lower bound is achievable
by a zero-delay source coder with lattice quantizers up to a constant space-filling loss.
Additional works on the operational meaning of the $\srd$ function can be found, for instance, in \cite{stavrou2016,kostina:2016}. 

\par These results show that the SRD function plays an important role to characterize the fundamental performance limitation of real-time communication systems and feedback control systems over communication channels. The purpose of this note is to revisit the existing results regarding the computation of the SRD function and correct an error in the literature.

\subsection{Related Literature}\label{subsec:related_literature}

\par Gorbunov and Pinsker \cite{gorbunov-pinsker1972a} characterized the finite-time SRD function for time-varying and stationary vector-valued Gauss-Markov processes with per-letter mean-squared error (MSE) distortion. For scalar-valued Gauss-Markov processes, they gave the expression of the finite-time SRD using the reverse-waterfilling optimization at each time instant. 
Bucy \cite{bucy:1982} considered the sensor-estimator joint design problem for Gauss-Markov processes in which the mean-square estimation error is minimized subject to the data-rate constraint. The optimal solution derived in \cite{bucy:1982} turned out to coincide with the optimal solution to the corresponding SRD problem derived in 
\cite{tatikonda:2004}. This result shed light on the ``sensor-estimator separation principle,'' asserting that an optimal solution to the SRD problem for Gauss-Markov processes can always be realized as a two-stage mechanism comprised of a linear memoryless sensor with Gaussian noise followed by the Kalman filter. Derpich and ${\O}$stergaard  \cite{derpich:2012} derived  bounds of the asymptotic SRD function for stationary, stable scalar-valued Gaussian autoregressive models with per-letter MSE distortion. They have also derived the closed form expression of the asymptotic SRD function of a stationary, stable scalar-valued Gaussian autoregressive model with unit memory. 
To our knowledge, the most general expression of the optimal solution to the SRD problem (with general sources and general distortion criteria) is given by Stavrou {\it et. al.} in \cite[Theorem 1]{stavrou-charalambous-charalambous2016}. Tanaka {\it et al.} \cite{tanaka:2017} studied the multidimensional Gaussian SRD problem subject to the weighted per-letter MSE distortion constraint by revisiting the sensor-estimator separation principle. They showed that the considered SRD problem can be reformulated as a log-determinant maximization problem \cite{vandenberghe1998determinant}, which can be solved by the standard semidefinite programming (SDP) solver.

\subsection{Contributions}\label{sub:contributions}

\par In this technical note, we revisit the SRD framework of \cite[Section IV]{tatikonda:2004} and re-examine some of the fundamental results derived therein for time-invariant multidimensional Gauss-Markov processes subject to a per-letter MSE distortion. 
\par As the first contribution, we prove via a counterexample, that the dynamic reverse-waterfilling algorithm of \cite[p. 14, eq. (15)]{tatikonda:2004} cannot be applied to the considered problem,\footnote{We note that Kostina and Hassibi in \cite{kostina:2016} questioned the correctness of the expressions of \cite[p. 14, eq. (15), eq. (17)]{tatikonda:2004} without, however, giving the precise reasons of their observations.} and consequently the expression  \cite[eq. (17)]{tatikonda:2004} of the asymptotic limit of the SRD function is not correct in general. 
\par As the second contribution, we provide a correct expression of the asymptotic limit of SRD function using a semidefinite representation, based on an earlier result \cite{tanaka:2017}.
This means that the value of the asymptotic limit of SRD function can be computed by
semidefinite programming (SDP).
\par The rest of this technical note is structured as follows. In Section \ref{sec:problem_formulation}, we formulate the finite-time SRD function of time-invariant vector-valued Gauss-Markov processes under per-letter MSE distortion criteria and its per unit time asymptotic limit. In Section \ref{sec:prior_work}, we review some structural results on the considered $\srd$ problem.
Section \ref{sec:main_results} presents the main results of this technical note and in Section \ref{sec:conclusions} we draw conclusions.

\paragraph*{\bf Notation} Let ${\cal X}$ be a complete separable metric space, and ${\cal B}_{{\cal X}}$ be the Borel $\sigma$-algebra on ${\cal X}$. Let the triplet $(\Omega,{\cal F},{\cal P})$ be a probability space and ${\bf x}:(\Omega,{\cal F})\longmapsto({\cal X},{\cal B}_{\cal X})$ be a random variable. We use lower case boldface letters such as ${\bf x}$, to denote random variable while $x\in{\cal X}$ denotes the realization of ${\bf x}$. For a random variable ${\bf x}$,  we denote the probability distribution induced by ${\bf x}$ on $({\cal X}, {\cal B}_{\cal X})$ by ${\bf P}_{X}(dx)\equiv {\bf P}(dx)$. We denote the conditional distribution of ${\bf y}$ given ${\bf x}=x$ by ${\bf P}_{{\bf y}|{\bf x}}(dy|{\bf x}=x)  \equiv {\bf P}(dy|x)$. We denote random vectors ${\bf x}^n=({\bf x}_0,\ldots,x_n)$ and ${\bf x}^{-1}=({\bf x}_{-\infty},\ldots,{\bf x}_{-1})$. We denote by $A \succ 0$ (respectively, $A \succeq 0$) a positive-definite matrix (respectively, positive-semidefinite matrix). We denote by $I_p\in\mathbb{R}^{p\times{p}}$ the p-dimensional identity matrix. For a positive-semidefinite matrix $\Theta$, we write $\|x\|_\Theta\triangleq\sqrt{x\T\Theta{x}}$.

% =====================================================
%
%
% Problem Formulation
%
%
% =====================================================
\section{Problem Formulation}
\label{sec:problem_formulation}

\par In this section, we recall the definition of the finite time SRD function with per-letter MSE distortion criteria and its per unit time asymptotic limit. Let the distributions of the source random process ${\bf x}$ and the reconstruction random process ${\bf y}$ be given by
\begin{align}
{\bf P}(dx^n)&\triangleq\prod\nolimits_{t=0}^n{\bf P}(dx_t|{x}^{t-1}), \label{eqsource} \\
{\bf P}(dy^n||x^n)&\triangleq\prod\nolimits_{t=0}^n{\bf P}(dy_t|y^{t-1},x^t).  \label{eqreconstruction}
\end{align}
We assume that ${\bf P}(dx_0|{x}^{-1})={\bf P}(dx_0)$ and ${\bf P}(dy_0|y^{-1},x^0)={\bf P}(dy_0|x_0)$. Denote by ${\bf P}(d{x}^n,d{y}^n)\triangleq{\bf P}(dx^n)\otimes{\bf P}(dy^n||x^n)$ the joint distribution, and let ${\bf P}(d{y}_t|{y}^{t-1})$ be the marginal on ${y}_t\in{\cal Y}_t$ induced by the joint distribution ${\bf P}(d{x}^n,d{y}^n)$.
In the general SRD problem, the source distribution \eqref{eqsource} is given, while the reconstruction distribution \eqref{eqreconstruction} is to be synthesized to minimize the mutual information $I({\bf x}^n;{\bf y}^n)$ subject to a certain distortion constraint. Notice that the mutual information under the considered setting admits the following expressions:
\begin{subequations}
\label{mutual:information}
\begin{align}
I({\bf x}^n;{\bf y}^n)&\triangleq\sum\nolimits_{t=0}^n{I}({\bf x}^n;{\bf y}_t|{\bf y}^{t-1}),~t<n\nonumber\\
&\stackrel{(a)}=\sum\nolimits_{t=0}^n{I}({\bf x}^t;{\bf y}_t|{\bf y}^{t-1}),\label{eq.1a}\\
&=\sum\nolimits_{t=0}^n\mathbb{E}\log\left(\frac{{\bf P}(d{\bf y}_t|{\bf y}^{t-1},{\bf x}^t)}{{\bf P}(d{\bf y}_t|{\bf y}^{t-1})}\right),\label{eq.1b}
\end{align}
\end{subequations}
where (a) follows from the condition independence ${\bf P}(dy_t|y^{t-1},x^n)={\bf P}(dy_t|y^{t-1},x^t),~\forall(x^n,y^{t-1})$, and $\mathbb{E}\{\cdot\}$ is the expectation with respect to the joint probability distribution ${\bf P}(d{x}^n,d{y}^n)$.

\subsection{Finite-time Gaussian SRD function}

\par Next, we formally introduce the finite-time SRD function of time-invariant vector-valued Gauss-Markov sources subject to weighted per-letter MSE distortion criteria studied by Tatikonda {\it et al.} in \cite[Section IV]{tatikonda:2004}.
Let ${\bf x}_t$ be a time-invariant $\mathbb{R}^p$-valued Gauss-Markov process
\begin{align}
{\bf x}_{t+1}=A{\bf x}_t+{\bf w}_t,~t=0,\ldots,n,\label{prob_stat:eq.1}
\end{align} 
where $A\in\mathbb{R}^{p\times{p}}$ is a deterministic matrix, ${\bf x}_0\sim{\cal N}(0;\Sigma_{{\bf x}_0})$ is the initial state with $\Sigma_{{\bf x}_0}\succ 0$, and ${\bf w}_t\in\mathbb{R}^p\sim{\cal N}(0;\Sigma_{{\bf w}})$, is a white Gaussian noise process independent of ${\bf x}_0$. The finite-time $\srd$ function is defined by 
\begin{subequations}\label{prob_stat1}
\begin{align}
{R}^{\srd}_{0,n}(D) \triangleq & \inf_{{\bf P}(dy^n||x^n)}\frac{1}{n+1}I({\bf x}^n;{\bf y}^n),\label{prob_stat:eq.2}\\
&\mbox{s.t.}~~\mathbb{E}\|{\bf x}_t-{\bf y}_t\|_{\Theta_t}^2\leq{D},~~\forall{t=0, \ldots , n}\label{prob_stat:eq.3}
\end{align}
\end{subequations}
provided the infimum exists.
For simplicity, we assume $\Theta_t=I_p$ in the sequel. The extension of the results to general $\Theta_t \succeq 0$ is straightforward.

\subsection{Asymptotic Limits}\label{subsec:infinite_horizon}

\par Let ${\bf x}_t$ be the time-invariant $\mathbb{R}^p$-valued Gauss-Markov process of \eqref{prob_stat:eq.1}. The per unit time asymptotic limit of \eqref{prob_stat1} is defined by
\begin{align}
{R}^{\srd}(D) &\triangleq \lim_{n\longrightarrow\infty}{R}^{\srd}_{0,n}(D),\label{infinite:eq.2}
\end{align}
provided the limit exists. 
\begin{remark}
For unstable Gauss-Markov processes (i.e., matrix $A$ in \eqref{prob_stat:eq.1} has eigenvalues with magnitude greater than one), then we must have \cite{tatikonda:2004,nair-evans2004}: 
\begin{align}
{R}^{\srd}(D)\geq\sum_{\lambda_i(A)>1}\log|\lambda_i(A)|,\label{unstable_eig:eq.1}
\end{align}
where $\lambda_i(A)$ denotes the $i^{\th}$ eigenvalue of matrix $A$.
\end{remark}

\par If  we interchange the $\lim$ and $\inf$ in \eqref{infinite:eq.2}, we obtain the following expression:
\begin{align}
\widehat{R}^{\srd}(D) \triangleq &\inf_{{\bf P}(dy^\infty||x^\infty)}\lim_{n\longrightarrow\infty}\frac{1}{n+1}I({\bf x}^n;{\bf y}^n),\label{stationary:eq.3}\\
&\mbox{s.t.}~~\mathbb{E}\|{\bf x}_t-{\bf y}_t\|^2\leq{D}, \; \forall t
\end{align}
where ${\bf P}(dy^\infty||x^\infty)$ denotes the sequence of conditional probability distributions ${\bf P}(dy_t|y^{t-1},x^t)$, $t=0,1,\ldots$. Note that ${R}^{\srd}(D)\leq\widehat{R}^{\srd}(D)$ holds trivially.

%=====================================================
%
%
% Prior Work on $\srd$ function for Time-Invariant Gauss-Markov Sources
%
%
% =====================================================

\section{Prior Work on $\srd$ function for Time-Invariant Gauss-Markov Sources}\label{sec:prior_work}

\par In this section, we provide some structural results derived in \cite{tatikonda:2004} and \cite{tanaka:2017} for the optimization problem \eqref{prob_stat1}. We also summarize explicit expressions of $R^{\srd}(D)$ and $\widehat{R}^{\srd}(D)$ that are available in the literature.

\subsection{Structural results of the optimal solution}\label{subsec:structural_results}

\begin{lemma}
\label{lemma:1}
Let the source ${\bf x}_t\in\mathbb{R}^p$ be the Gauss-Markov process described by \eqref{prob_stat:eq.1}. Then, the  minimizer for \eqref{prob_stat1} can be chosen with the form 
\begin{align}
{\bf P}^*(dy_t|y^{t-1},x^t)\equiv{\bf P}^*(dy_t|y^{t-1},x_t),~t=0,\ldots,n.\label{optimal:solution}
\end{align}
Moreover, for each $t$, \eqref{optimal:solution} is conditionally Gaussian probability distribution that can be realized by a linear equation of the form
\begin{align}
{\bf y}_t=\bar{A}_t{\bf x}_t+\bar{B}_t{\bf y}^{t-1}+{\bf v}_t,~t=0,\ldots,n,\label{linear:expression}
\end{align} 
where $\bar{A}_t\in\mathbb{R}^{p\times{p}}$ and $\bar{B}_t\in\mathbb{R}^{p\times{t}p}$ are matrices, and ${\bf v}_t\sim{N}(0;\Sigma_{{\bf v}_t})$ is a random variable independent of $(\bx_0, \bw^t, \bv^{t-1})$ and $\Sigma_{{\bf v}_t}\succeq 0$ for each $t=0, \ldots , n$.
\end{lemma}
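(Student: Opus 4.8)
The plan is to prove the two assertions in turn: first the \emph{Markov (state) reduction} \eqref{optimal:solution}, and then the \emph{Gaussian/linear realization} \eqref{linear:expression}.

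\textbf{Step 1 (Markov reduction).} Starting from any feasible kernel ${\bf P}(dy_t|y^{t-1},x^t)$, the chain rule of mutual information gives, for each $t$,
\begin{align*}
I(\bx^t;\by_t|\by^{t-1}) = I(\bx_t;\by_t|\by^{t-1}) + I(\bx^{t-1};\by_t|\bx_t,\by^{t-1}) \ge I(\bx_t;\by_t|\by^{t-1}),
\end{align*}
since the last term is nonnegative. I would then replace the given kernel by the ``state-only'' kernel $\tilde{\bf P}(dy_t|y^{t-1},x_t)\triangleq{\bf P}(dy_t|y^{t-1},x_t)$, the latter marginal being computed from the original joint law. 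The crux is to show that this substitution leaves the objective no larger while keeping feasibility. I would establish by induction on $t$ that the joint law of $(\bx_t,\by^{t-1})$ is identical under the original and the modified systems. The base case is immediate from the assumption ${\bf P}(dy_0|y^{-1},x^0)={\bf P}(dy_0|x_0)$. For the inductive step, the matched $(\bx_t,\by^{t-1})$ marginal together with the common conditional $\tilde{\bf P}(dy_t|y^{t-1},x_t)={\bf P}(dy_t|y^{t-1},x_t)$ yields a matched $(\bx_t,\by^t)$ law; the recursion $\bx_{t+1}=A\bx_t+\bw_t$, with $\bw_t$ independent of $(\bx_t,\by^t)$ in both systems (since $\by^t$ is driven only by $\bx_0,\bw^{t-1}$ and the reconstruction noise), then propagates the match to $(\bx_{t+1},\by^t)$, closing the induction. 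Because the per-letter distortion depends only on the $(\bx_t,\by_t)$ marginal, feasibility is preserved. Moreover the modified system is Markov by construction, so $I^{\mathrm{new}}(\bx^t;\by_t|\by^{t-1})=I^{\mathrm{new}}(\bx_t;\by_t|\by^{t-1})=I^{\mathrm{old}}(\bx_t;\by_t|\by^{t-1})$, and summing the displayed inequality over $t$ yields $I^{\mathrm{new}}(\bx^n;\by^n)\le I^{\mathrm{old}}(\bx^n;\by^n)$. This proves \eqref{optimal:solution}.

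\textbf{Step 2 (Gaussian and linear realization).} Restricting to Markov-form causal kernels, I would let ${\bf P}_G$ be the zero-mean jointly Gaussian law whose covariance matches $\cov(\bx^n,\by^n)$ and which is itself realized by a causal, Markov-form linear-Gaussian reconstruction; such a realization exists because causality together with the Markov property of the source forces the required future cross-covariances to be predictable from the current state, so the target covariance is attainable within the causal Markov class. The optimality of ${\bf P}_G$ follows from a maximum-entropy argument: writing $I(\bx^n;\by^n)=h(\bx^n)-h(\bx^n|\by^n)$, the term $h(\bx^n)$ is fixed since the source is Gaussian, while
\begin{align*}
h(\bx^n|\by^n)\le \tfrac{1}{2}\log\det\!\big(2\pi e\,\overline\Sigma\big),\qquad \overline\Sigma\triangleq\mathbb{E}\big[\cov(\bx^n|\by^n)\big],
\end{align*}
by the Gaussian maximum-entropy bound applied conditionally and concavity of $\log\det$, and $\overline\Sigma$ is dominated in the positive-semidefinite order by the linear-MMSE error covariance, which is exactly $\cov_{{\bf P}_G}(\bx^n|\by^n)$. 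Hence $h(\bx^n|\by^n)\le h_{{\bf P}_G}(\bx^n|\by^n)$ and $I_{{\bf P}}(\bx^n;\by^n)\ge I_{{\bf P}_G}(\bx^n;\by^n)$, while the distortion, being a second-moment functional, is unchanged. Finally, any zero-mean conditionally Gaussian kernel ${\bf P}_G(dy_t|y^{t-1},x_t)$ is precisely the law induced by $\by_t=\bar A_t\bx_t+\bar B_t\by^{t-1}+\bv_t$ with $\bv_t\sim\mathcal N(0,\Sigma_{\bv_t})$ the innovation, independent of $(\bx_0,\bw^t,\bv^{t-1})$; this gives \eqref{linear:expression}.

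\textbf{Main obstacle.} The delicate part is not any single inequality but the bookkeeping that keeps both modifications admissible. In Step 1 I must ensure that altering the kernel at time $t$ does not corrupt the downstream mutual-information terms, which is why the induction is organized around preserving the entire $(\bx_t,\by^{t-1})$ law rather than merely the instantaneous distortion; the Markov structure of the source is essential here. In Step 2 the subtlety is that a generic Gaussian with matching covariance need not be causal or Markov, so I must argue that the matched covariance is realizable \emph{within} the causal Markov class before invoking maximum entropy, and must be careful to route the entropy bound through the averaged conditional covariance $\overline\Sigma$ (via concavity of $\log\det$ and the fact that the minimum mean-square error covariance is dominated by the linear one) so that the inequality points in the direction that certifies Gaussian optimality.
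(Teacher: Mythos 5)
The paper itself offers no proof of this lemma beyond citing \cite[Lemma 4.3]{tatikonda:2004}; your two-step argument --- (i) marginalize the kernel to one depending only on $(y^{t-1},x_t)$ while preserving the law of $(\bx_t,\by^t)$ by induction, then (ii) pass to the second-moment--matched jointly Gaussian law and certify its optimality via conditional maximum entropy, concavity of $\log\det$, and the MMSE-versus-LMMSE comparison --- is precisely the standard route taken in that reference, and both steps are sound. The only assertion you leave as a claim is the one you yourself flag as the obstacle: that the matched-covariance Gaussian law is realizable by a \emph{causal} kernel. This can be closed in one line along the direction you indicate: for $s>t$ write $\bx_s=A^{s-t}\bx_t+\sum_{k=t}^{s-1}A^{s-1-k}\bw_k$ with the $\bw_k$, $k\geq t$, independent of $(\by^t,\bx^t)$, so that $\cov(\by_t,\bx_s)=\cov(\by_t,\bx_t)(A^{s-t})\T$ and $\cov(Z,\bx_s)=\cov(Z,\bx_t)(A^{s-t})\T$ with $Z\triangleq(\by^{t-1},\bx^t)$; since $\bx_t$ is a sub-block of $Z$, the partial covariance of $\by_t$ and $\bx_s$ given $Z$ vanishes, which under the Gaussian surrogate is exactly the conditional independence of $\by_t$ and $(\bx_{t+1},\ldots,\bx_n)$ given $(\by^{t-1},\bx^t)$ needed for the causal factorization (a second application of your Step 1 then restores the Markov form and the linear realization \eqref{linear:expression} with $\bv_t$ independent of $(\bx_0,\bw^t,\bv^{t-1})$). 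With that line made explicit, your proof is complete and self-contained.
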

\begin{proof}
The proof is found in \cite[Lemma 4.3]{tatikonda:2004}.
\end{proof}

\par The following two lemmas strengthen Lemma~\ref{lemma:1}.

\begin{lemma}\label{lemma:2}
In Lemma~\ref{lemma:1}, the minimizer process \eqref{linear:expression} can also be written as 
\begin{subequations}
\label{eq:xqy}
\begin{align}
\bq_t &= \bar{A}_t\bx_t+\bv_t, \\
\by_t &=\mathbb{E}(\bx_t|\bq^t),
\end{align}
\end{subequations}
where ${\bf v}_t\sim{N}(0;\Sigma_{{\bf v}_t})$ is independent of $(\bx_0, \bw^t, \bv^{t-1})$ and $\Sigma_{{\bf v}_t}\succeq 0$ for $t=0, \ldots, n$.
Here, the matrices $\bar{A}_t, \Sigma_{\bv_t}$, $t=0,\ldots, n$ are chosen equally to those in \eqref{linear:expression}.
\end{lemma}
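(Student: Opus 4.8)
The plan is to take the optimal linear reconstructor furnished by Lemma~\ref{lemma:1} and reinterpret it as a ``sensor followed by an MMSE estimator'' by showing that (i) the quantities $\bq_t=\bar A_t\bx_t+\bv_t$ carry exactly the same information as $\by^t$, and (ii) passing to the conditional mean $\mathbb{E}(\bx_t|\bq^t)$ leaves both the rate and the distortion constraint intact. Throughout I keep the matrices $\bar A_t$ and the noise covariances $\Sigma_{\bv_t}$ exactly as in \eqref{linear:expression}, so the sensor is literally unchanged and only the reconstruction map is modified.

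First I would set $\bq_t\triangleq\bar A_t\bx_t+\bv_t=\by_t-\bar B_t\by^{t-1}$ and argue by induction on $t$ that the map $\by^t\mapsto\bq^t$ is block lower-triangular with identity diagonal blocks, hence invertible; therefore $\sigma(\bq^t)=\sigma(\by^t)$ for every $t$. Since, conditioned on $\by^{t-1}$ (equivalently $\bq^{t-1}$), the variables $\by_t$ and $\bq_t$ differ only by the $\sigma(\by^{t-1})$-measurable shift $\bar B_t\by^{t-1}$, the chain-rule expansion \eqref{eq.1a} gives $I(\bx^t;\by_t|\by^{t-1})=I(\bx^t;\bq_t|\bq^{t-1})$, and hence $I(\bx^n;\by^n)=I(\bx^n;\bq^n)$. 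In words, the rate of the scheme is entirely determined by the sensor $(\bar A_t,\Sigma_{\bv_t})$ and is insensitive to how the decoder post-processes $\bq^t$.

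Next I would replace the reconstruction by $\hat\by_t\triangleq\mathbb{E}(\bx_t|\bq^t)$. Because $\hat\by_t$ is a deterministic function of $\bq^t$ while the original $\by_t$ is also $\sigma(\bq^t)$-measurable, the MMSE property yields $\mathbb{E}\|\bx_t-\hat\by_t\|^2\le\mathbb{E}\|\bx_t-\by_t\|^2\le D$, so the constraint \eqref{prob_stat:eq.3} continues to hold; at the same time $\hat\by^n$ is a function of $\bq^n$, so $I(\bx^n;\hat\by^n)\le I(\bx^n;\bq^n)=I(\bx^n;\by^n)$. Since the right-hand side is the optimal value of \eqref{prob_stat1}, the new scheme is feasible and attains that value, i.e.\ it is again a minimizer, now of the separated form \eqref{eq:xqy} with the same $\bar A_t,\Sigma_{\bv_t}$. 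It remains to check that $\hat\by$ is a legitimate causal reconstruction: using $\bx_t=A\bx_{t-1}+\bw_{t-1}$ and the Gaussianity of all variables, the Kalman recursion expresses $\hat\by_t=\mathbb{E}(\bx_t|\bq^t)$ as a function of $\hat\by_{t-1}$ and the current measurement $\bq_t=\bar A_t\bx_t+\bv_t$ (the error covariances being deterministic), so $\hat\by_t$ is generated by a kernel of the admissible form ${\bf P}(dy_t|y^{t-1},x_t)$.

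I expect the delicate step to be the second inequality in the previous paragraph, namely ruling out that conditional-mean post-processing strictly \emph{lowers} the rate. The point is that $\sigma(\hat\by^t)\subseteq\sigma(\bq^t)$ in general, so a priori one only gets $I(\bx^n;\hat\by^n)\le I(\bx^n;\by^n)$; equality is forced precisely because the latter is already minimal, which is where optimality of the Lemma~\ref{lemma:1} solution is used. A closely related subtlety is confirming that $\sigma(\hat\by^{t-1})$ retains enough information to close the filter recursion---this is exactly the Gaussian sufficiency of the conditional mean (together with the deterministic error covariance), and it is the only place where the Gauss--Markov structure of \eqref{prob_stat:eq.1} is essential.
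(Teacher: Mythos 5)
Your proposal is correct and follows the same skeleton as the paper's proof in Appendix~A: the same construction $\bq_t=\bar{A}_t\bx_t+\bv_t=\by_t-\bar{B}_t\by^{t-1}$, the same block-lower-triangular invertibility argument giving $\sigma(\bq^t)=\sigma(\by^t)$ and $I(\bx^n;\by^n)=I(\bx^n;\bq^n)$, the same MMSE-projection argument for the distortion constraint, and the same Kalman-filter recursion to certify that $\hat{\by}_t=\mathbb{E}(\bx_t|\bq^t)$ is realizable by an admissible causal kernel ${\bf P}(dy_t|y^{t-1},x_t)$. The one genuine divergence is in how the rate of the new scheme is pinned down. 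The paper proves the exact equality $I(\bx^n;\by^n)=I(\bx^n;\tilde{\by}^n)$ directly, by establishing the two Markov chains $\tilde{\by}^t\leftrightarrow\bq^t\leftrightarrow\bx_t$ and $\bq^t\leftrightarrow\tilde{\by}^t\leftrightarrow\bx_t$ (the second via the orthogonality principle and Gaussianity, i.e., sufficiency of the conditional mean) and then applying the data processing inequality in both directions term by term in the chain rule. You instead prove only the one-sided bound $I(\bx^n;\hat{\by}^n)\le I(\bx^n;\bq^n)=I(\bx^n;\by^n)$ by data processing and then invoke the optimality of the Lemma~\ref{lemma:1} solution to force equality, since the new scheme is feasible and cannot beat the minimum. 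Both arguments are valid; yours is shorter and avoids the orthogonality-principle step entirely, at the cost of leaning on minimality (so it would not show that the transformation is rate-preserving when applied to a non-optimal linear scheme, which the paper's version does, and which is the form actually reused later in the proof of Theorem~\ref{theomain2} via \eqref{eq14}). Your closing remarks correctly identify where Gaussianity enters (the filter recursion closing over $\hat{\by}_{t-1}$ alone), so there is no gap.
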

\begin{proof}
The derivation is given in \cite{tanaka:2017}. For completeness we include the proof in Appendix \ref{proof:lemma:2}. (Author's comment: In the final version, Appendix \ref{proof:lemma:2} may be omitted.)
\end{proof}

\begin{lemma}\label{lemma:3}
If $R_{0,n}^{{\srd}}(D)<\infty$, then the minimizer process \eqref{linear:expression} can  be written as
\begin{subequations} 
\label{eqpy}
\begin{align}
{\bf p}_t &= E_t\bx_t+\bz_t, \\
\by_t &=\mathbb{E}(\bx_t|{\bf p}^t),
\end{align}
\end{subequations}
where $\bz_t\sim N(0, \Sigma_{\bz_t})$ is independent of $(\bx_0, \bw^t, \bz^{t-1})$ and $\Sigma_{\bz_t}\succ 0$ for $t=0,\ldots, n$. 
\end{lemma}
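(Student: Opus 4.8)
The plan is to begin from the representation supplied by Lemma~\ref{lemma:2}, namely $\bq_t=\bar{A}_t\bx_t+\bv_t$ with $\by_t=\mathbb{E}(\bx_t\mid\bq^t)$ and $\Sigma_{\bv_t}\succeq 0$, and to upgrade the noise covariance from positive-semidefinite to positive-definite by exploiting the finiteness of the rate. The only obstruction to $\Sigma_{\bv_t}\succ 0$ is a nontrivial null space of $\Sigma_{\bv_t}$: along such directions $\bq_t$ is a \emph{noiseless} linear measurement of $\bx_t$. The guiding idea is that, when $R_{0,n}^{\srd}(D)<\infty$, these noiseless measurements can carry no new information given the past, and may therefore be corrupted by an independent Gaussian perturbation without altering either the reconstruction $\by_t$ or the mutual information.

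First I would make this redundancy precise. Finiteness of the rate forces $I(\bx_t;\bq_t\mid\bq^{t-1})<\infty$ for every $t$. Writing $P_{t|t-1}\triangleq\cov(\bx_t\mid\bq^{t-1})$ and splitting $\mathbb{R}^p$ into the range and null space of $\Sigma_{\bv_t}$, let $\bar{A}_t^{(2)}$ collect the rows of $\bar{A}_t$ acting along the null space, so that the corresponding component of $\bq_t$ equals $\bar{A}_t^{(2)}\bx_t$ deterministically. Conditioned on $(\bx_t,\bq^{t-1})$ this component is a point mass, hence a finite conditional mutual information is possible only if its law given $\bq^{t-1}$ alone is also degenerate along these directions; equivalently $\bar{A}_t^{(2)}P_{t|t-1}(\bar{A}_t^{(2)})\T=0$, i.e. $\bar{A}_t^{(2)}\bx_t$ is almost surely a fixed affine function of $\bq^{t-1}$. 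This is the analytic crux and the step I expect to demand the most care, since it relies on the absolute-continuity characterization of finite mutual information for possibly degenerate Gaussian vectors.

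Given this redundancy, I would take $E_t\triangleq\bar{A}_t$ and define ${\bf p}_t\triangleq E_t\bx_t+\bz_t$ with $\bz_t\triangleq\bv_t+{\bf n}_t$, where ${\bf n}_t\sim N(0,\Sigma_{{\bf n}_t})$ is a fresh Gaussian vector, supported on the null space of $\Sigma_{\bv_t}$, independent of $(\bx_0,\bw^t,\bv^t,{\bf n}^{t-1})$, and chosen so that $\Sigma_{\bz_t}=\Sigma_{\bv_t}+\Sigma_{{\bf n}_t}\succ 0$. Since ${\bf n}_t$ perturbs only the null-space coordinates, the range (noisy) coordinates of ${\bf p}_t$ coincide with those of $\bq_t$, so $\sigma({\bf p}^t)\supseteq\sigma(\bq^t)$; and since $\bar{A}_t^{(2)}\bx_t$ is already determined by $\bq^{t-1}$, conditioning additionally on its noisy copy $\bar{A}_t^{(2)}\bx_t+{\bf n}_t^{(2)}$ reveals nothing further about $\bx_t$. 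An induction on $t$ then delivers $\mathbb{E}(\bx_t\mid{\bf p}^t)=\mathbb{E}(\bx_t\mid\bq^t)=\by_t$ and $I(\bx^n;{\bf p}^n)=I(\bx^n;\bq^n)$, while the independence of ${\bf n}_t$ secures $\bz_t\perp(\bx_0,\bw^t,\bz^{t-1})$. This produces the representation \eqref{eqpy} with $\Sigma_{\bz_t}\succ 0$, as required.
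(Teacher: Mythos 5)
Your proposal is correct, and it shares its first step with the paper's proof but then takes a genuinely different route in the construction. Both arguments start from the Lemma~\ref{lemma:2} representation $\bq_t=\bar{A}_t\bx_t+\bv_t$, split along the range and null space of $\Sigma_{\bv_t}$, and use finiteness of $I(\bx_t;\bq_t|\bq^{t-1})$ to control the noiseless coordinates $\bar{A}_t^{(2)}\bx_t$. The paper draws the stronger conclusion $\mathrm{Im}(\bar{A}_t)\subseteq\mathrm{Im}(\Sigma_{\bv_t})$, i.e.\ $\bar{A}_t^{(2)}=0$, and then simply \emph{discards} the degenerate coordinates: setting $E_t=\bu_{1,t}\T\bar{A}_t$ and $\bz_t=\bu_{1,t}\T\bv_t$ makes $\bu_t\T\bq_t$ equal to ${\bf p}_t$ stacked on a deterministic zero, so $\sigma({\bf p}^t)=\sigma(\bq^t)$ is immediate and ${\bf p}_t$ has (possibly) reduced dimension. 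You instead prove only the weaker redundancy $\bar{A}_t^{(2)}P_{t|t-1}(\bar{A}_t^{(2)})\T=0$ and compensate by \emph{dithering} the degenerate coordinates with independent noise, which then obliges you to run the induction showing $\sigma(\bq^t)$ is generated by the range coordinates alone and the conditional-independence argument that the dithered copies reveal nothing about $\bx_t$. Both routes are sound, and each buys something: under the paper's standing assumptions ($\Sigma_{\bx_0}\succ 0$ and, implicitly, $\Sigma_{\bw}\succ 0$, which the paper needs elsewhere for $\log\det\Sigma_{\bw}$) one has $P_{t|t-1}\succ 0$ for all $t$, so your redundancy condition already forces $\bar{A}_t^{(2)}=0$ and the noise-injection machinery could be dropped in favor of the paper's projection; conversely, your version remains valid even if $P_{t|t-1}$ were singular, a case in which the paper's parenthetical justification (``a deterministic component of $\bq_t$ implies unbounded information'') is, strictly speaking, incomplete. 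The only cosmetic difference is that your ${\bf p}_t$ keeps dimension $p$ while the paper's is lower-dimensional --- both satisfy the lemma as stated, though the dimension reduction is what the remark after Lemma~\ref{lemma:3} and the rank discussion around \eqref{eqesigmav} refer to.
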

\begin{proof}
The derivation is given in \cite{tanaka:2017}. For completeness, we include the proof in Appendix \ref{proof:lemma:3}. (Author's comment: In the final version, Appendix \ref{proof:lemma:3} may be omitted.)
\end{proof}

Notice that the result of Lemma~\ref{lemma:3} is stronger than that of Lemma~\ref{lemma:2} in that the covariance matrix $\Sigma_{\bz_t}$ can always be chosen as a strictly positive-definite matrix. Lemma~\ref{lemma:2} and \ref{lemma:3} are also different in that the dimension of ${\bf q}_t$ is always $p$, while the dimension of ${\bf p}_t$ can be smaller than $p$.

\subsection{Expressions of $\srd$ functions}

\par The authors of \cite{tatikonda:2004} obtained the following explicit form of $R^{\srd}(D)$ for time-invariant \emph{scalar-valued} Gauss-Markov processes subject to the MSE distortion\footnote{This closed form expression is also obtained for stationary stable Gaussian autoregressive sources with unit memory and per-letter $\mse$ distortion in \cite{gorbunov-pinsker1972a,derpich:2012} and for time-invariant stable or unstable Gaussian autoregressive sources with unit memory and average MSE distortion in \cite{stavrou2016}.}:
\begin{equation}
\label{eq:SRDscalar}
R^{\srd}(D)=\max \left\{0, \frac{1}{2}\log\left(A^2+\frac{\Sigma_{\bw}}{D}\right)\right\}.
\end{equation}
In \cite{tatikonda:2004}, it is also claimed that $R^{\srd}(D)$ for time-invariant $\mathbb{R}^p$-valued Gauss-Markov processes admits an explicit form
\begin{equation}
\label{eq:SRDvec}
R^{\srd}(D)=\frac{1}{2}\log\left| AA{\T}+\frac{p}{D}\Sigma_{\bw}\right|
\end{equation}
over the low distortion region of $D$ satisfying
\begin{equation}
\label{eqdcond}
\frac{D}{p} \leq \min_i \lambda_i \left(\frac{D}{p}AA{\T}+\Sigma_{\bw} \right)
\end{equation}
where $\lambda_i(\cdot)$ denotes the $i^{\th}$ eigenvalue. Based on a dynamic reverse-waterfilling algorithm, Stavrou {\it et. al.} in \cite{stavrou-charalambous-charalambous2016} constructed an iterative numerical algorithm to compute $R^{\srd}(D)$ for time-varying and time-invariant $\mathbb{R}^p$-valued Gauss-Markov processes, which extends \eqref{eq:SRDvec} to the entire positive region of $D$. 
Tanaka {\it et. al.} \cite{tanaka:2017}, on the other hand, derived the following semidefinite representation of $\widehat{R}^{\srd}(D)$ for all $D>0$:
\begin{align}
\widehat{R}^{\srd}(D)= \min_{P,Q\succ 0} & \; - \frac{1}{2} \log \det Q + \frac{1}{2} \log \det \Sigma_{\bw}. \label{eq:SRDsdp} \\
\text{s.t. } &  \;\; P \preceq APA{\T}+\Sigma_{\bw} \nonumber \\
& \;\; \trace(P) \leq D \nonumber \\
& \; \left[ \begin{array}{cc}
P-Q & PA{\T} \nonumber \\
AP & APA{\T}+\Sigma_{\bw}\end{array}\right]\succeq 0. \nonumber 
\end{align}
\par Unfortunately, the following simple numerical experiment shows that the results \eqref{eq:SRDscalar}, \eqref{eq:SRDvec}, and  \eqref{eq:SRDsdp} cannot be true simultaneously. Figure~\ref{fig:compare} shows $R^{\srd}(D)$ for an $\mathbb{R}^2$-valued Gauss-Markov process \eqref{prob_stat:eq.1} with $A=\left[\begin{array}{cc}6 & 0 \\ 0 & 1\end{array}\right]$ and $\Sigma_{\bw}=I_2$, plotted using the results of \cite{tatikonda:2004,stavrou-charalambous-charalambous2016,tanaka:2017}. The plot shows that \eqref{eq:SRDsdp} takes smaller values than \eqref{eq:SRDvec} and its extension to $D>0$ obtained in \cite{stavrou-charalambous-charalambous2016}. However, this is a contradiction to our earlier observation that ${R}^{\srd}(D)\leq\widehat{R}^{\srd}(D)$.

\section{Main Results}
\label{sec:main_results}
In this section, we establish the following statements.
\begin{itemize}
\item[(i)] The expression \eqref{eq:SRDvec} of $R^{\srd}(D)$ for $\mathbb{R}^p$-valued Gauss-Markov process is not correct, even in the region of $D$ satisfying \eqref{eqdcond}. Consequently, its extension derived in \cite{stavrou-charalambous-charalambous2016} does not compute the value of $R^{\srd}(D)$ correctly.
\item[(ii)] For $\mathbb{R}^p$-valued Gauss-Markov processes, it turns out that $\widehat{R}^{\srd}(D)={R}^{\srd}(D)$. Thus, \eqref{eq:SRDsdp} provides a semidefinite representation of $R^{\srd}(D)$.
%Since expression \eqref{eq:SRDvec} is not correct, then, the dynamic reverse-waterfilling algorithm of \cite[Eq. (15)]{tatikonda:2004} is also incorrect. 
\end{itemize}

We first show (i) by means of a simple counterexample.

\subsection{Counterexample}\label{subsec:counterexample}

\par In what follows, we show that if \eqref{eq:SRDscalar} holds then \eqref{eq:SRDvec} does not hold. To see this, consider an $\mathbb{R}^2$-valued process  \eqref{prob_stat:eq.1} with $A=\left[\begin{array}{cc}a & 0 \\ 0 & 0 \end{array}\right]$ and $\Sigma_{\bw}=I_2$.
If \eqref{eq:SRDvec} holds, we have
\begin{align}
R^{\srd}(D)&=\frac{1}{2}\log \left|AA{\T}+\frac{2}{D}\Sigma_{\bw} \right|, \nonumber \\
&=\frac{1}{2}\log \left(a^2+\frac{2}{D}\right)\frac{2}{D}. \label{exsrdf}
\end{align}
According to \eqref{eqdcond}, the above expression \eqref{exsrdf} is valid for all $D \leq 2$.
\begin{figure}[t]
    \centering
 \includegraphics[width=\columnwidth]{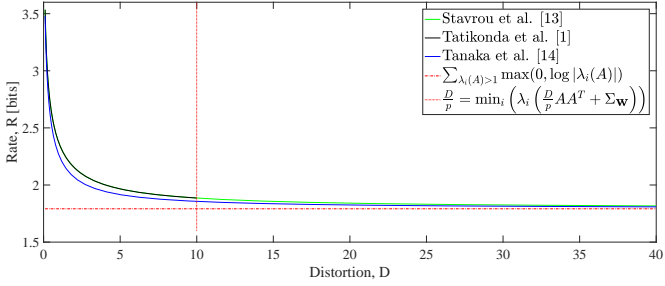}
    \caption{$R^{\srd}(D)$ plotted based on the methods of \cite{tatikonda:2004,stavrou-charalambous-charalambous2016,tanaka:2017} for an unstable time-invariant $\mathbb{R}^2$-valued Gauss-Markov source.}
    \label{fig:compare}
\end{figure}
On the other hand, notice that the considered $\mathbb{R}^2$-valued Gauss-Markov process can be viewed as two individual scalar Gauss-Markov processes:
\begin{align*}
{\bf x}_{1,t+1}&=a{\bf x}_{1,t}+{\bf w}_{1,t}, \;\; \bw_{1,t}\sim N(0,1), \\
{\bf x}_{2,t+1}&={\bf w}_{2,t}, \;\; {\bf w}_{2,t}\sim N(0,1).
\end{align*}
Applying \eqref{eq:SRDscalar} to each process, we have
\begin{align*}
R_1^{\srd}(D_1) &= \max\left\{0, \frac{1}{2}\log\left(a^2+\frac{1}{D_1}\right)\right\}, \\
R_2^{\srd}(D_2) &= \max\left\{0, \frac{1}{2}\log\left(\frac{1}{D_2}\right)\right\}.
\end{align*}
Notice that for all $D_1$ and $D_2$ satisfying $D_1+D_2=D$, we must have
\begin{equation}
\label{eqineq}
R^{\srd}(D) \leq R_1^{\srd}(D_1)+R_2^{\srd}(D_2).
\end{equation}
Now, if the expression \eqref{exsrdf} is correct, the left hand side ($\lhs$) of \eqref{eqineq} is
\begin{align}
R^{\srd}(1.5)&=\frac{1}{2}\log \left(a^2+\frac{4}{3}\right)\frac{4}{3}\nonumber \\
&=\frac{1}{2}\log \left(\frac{4}{3}a^2+\frac{16}{9}\right). \label{eqlhs}
\end{align}
The right hand side ($\rhs$) of \eqref{eqineq} is
\begin{align}
R_1^{\srd}(0.5)+R_2^{\srd}(1)
&=\frac{1}{2}\log \left(a^2+2\right)+0 \nonumber \\
&=\frac{1}{2}\log \left(a^2+2\right). \label{eqrhs}
\end{align}
(Notice that $D_2=1$ is achievable with zero-rate.)
However, \eqref{eqlhs}$>$\eqref{eqrhs} whenever $a^2> \frac{2}{3}$. This is a  contradiction to \eqref{eqineq}.

\begin{remark}
The above counterexample implies that there is a flaw in the dynamic reverse-waterfilling argument in \cite[Eg. (15)]{tatikonda:2004}. More precisely, unless the source process is $\IID$, (i.e., $A=0$) assigning equal distortions to each dimension is not optimal. Fig. \ref{fig:compare:white} shows that all the results coincide when $A=0$.
\end{remark}

\begin{figure}[t]
    \centering
 \includegraphics[width=\columnwidth]{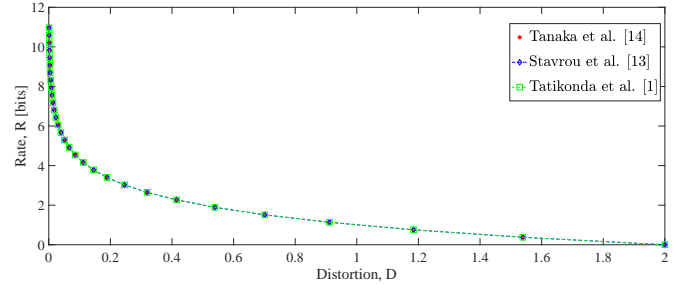}
    \caption{$R^{\srd}(D)$ plotted based on the methods of \cite{tatikonda:2004,stavrou-charalambous-charalambous2016,tanaka:2017}  for time-invariant $\mathbb{R}^2$-valued $\IID$ Gaussian sources, i.e., when $A=0$.}
    \label{fig:compare:white}
\end{figure}

\subsection{Semidefinite representation}

As the second main result of this paper, we show the statement (ii). 

\begin{theorem}
\label{theomain2}
For $\mathbb{R}^p$-valued Gauss-Markov processes, we have $R^{\srd}(D)=\widehat{R}^{\srd}(D)$.
\end{theorem}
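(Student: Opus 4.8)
The plan is to prove the two inequalities $R^{\srd}(D)\le\widehat R^{\srd}(D)$ and $R^{\srd}(D)\ge\widehat R^{\srd}(D)$ separately. The first is exactly the trivial bound already recorded in Section~\ref{subsec:infinite_horizon}: restricting any infinite-horizon policy to the window $\{0,\dots,n\}$ produces a feasible horizon-$n$ policy, which yields $\limsup_{n}R^{\srd}_{0,n}(D)\le\widehat R^{\srd}(D)$. All the substance is therefore in the converse bound $\liminf_{n}R^{\srd}_{0,n}(D)\ge\widehat R^{\srd}(D)$, and throughout I would use that, by \cite{tanaka:2017}, $\widehat R^{\srd}(D)$ equals the value of the semidefinite program \eqref{eq:SRDsdp}.

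First I would recast the finite-horizon problem as an optimization over error covariances. By Lemmas~\ref{lemma:1}--\ref{lemma:3} and the Kalman-filter realization of \cite{tanaka:2017}, an optimal policy is described by the filtering error covariances $P_t\succ0$ together with the one-step prediction covariances $\Pi_t=AP_{t-1}A\T+\Sigma_{\bw}$ (with $P_{-1}=\Sigma_{\bx_0}$), so that the distortion constraint reads $\trace(P_t)\le D$, feasibility reads $0\prec P_t\preceq\Pi_t$, and the per-letter rate is $I(\bx^t;\by_t|\by^{t-1})=\tfrac12\log\det\Pi_t-\tfrac12\log\det P_t$. Reindexing the prediction terms telescopes the sum: up to the two boundary contributions $\tfrac12\log\det(A\Sigma_{\bx_0}A\T+\Sigma_{\bw})$ and $-\tfrac12\log\det P_n$, which are $O(1)$ and hence negligible after dividing by $n+1$, one obtains $(n+1)R^{\srd}_{0,n}(D)=\sum_{t=0}^{n-1}\phi(P_t)+O(1)$, where $\phi(P)\triangleq\tfrac12\log\det(APA\T+\Sigma_{\bw})-\tfrac12\log\det P$.

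The crucial structural ingredient is that $\phi$ is convex on the cone $P\succ0$, and I would establish this without a Hessian computation by recognizing $\phi$ as the partial minimization over $Q$ of \eqref{eq:SRDsdp}. Eliminating $Q$ through the Schur complement gives the maximal choice $Q=(P^{-1}+A\T\Sigma_{\bw}^{-1}A)^{-1}$, and the determinant identity $\det(APA\T+\Sigma_{\bw})=\det\Sigma_{\bw}\,\det(P^{-1}+A\T\Sigma_{\bw}^{-1}A)\,\det P$ shows that the resulting value is exactly $\phi(P)$. Since $-\tfrac12\log\det Q$ is convex in $Q$ and the LMI in \eqref{eq:SRDsdp} is affine in $(P,Q)$, partial minimization over $Q$ of a jointly convex program returns a convex function of $P$; hence $\phi$ is convex and $\widehat R^{\srd}(D)=\min\{\phi(P):\trace(P)\le D,\ P\preceq APA\T+\Sigma_{\bw},\ P\succ0\}$.

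I would then close the converse by time-averaging. Writing $\bar P_n\triangleq\tfrac1n\sum_{t=0}^{n-1}P_t$ for an optimal horizon-$n$ policy, the trace constraint gives $\trace(\bar P_n)\le D$, while averaging $P_t\preceq AP_{t-1}A\T+\Sigma_{\bw}$ over $t$ yields $\bar P_n\preceq A\bar P_nA\T+\Sigma_{\bw}+O(1/n)$, the slack arising only from the boundary covariances $P_{-1}$ and $P_{n-1}$. Jensen's inequality applied to the convex $\phi$ gives $\tfrac1n\sum_{t}\phi(P_t)\ge\phi(\bar P_n)$, so $R^{\srd}_{0,n}(D)\ge\phi(\bar P_n)-O(1/n)$; the trace bound makes $\{\bar P_n\}$ precompact, and any subsequential limit $\bar P_\infty$ is exactly feasible for \eqref{eq:SRDsdp}, whence lower semicontinuity of $\phi$ gives $\liminf_n R^{\srd}_{0,n}(D)\ge\phi(\bar P_\infty)\ge\widehat R^{\srd}(D)$. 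I expect the main obstacle to lie precisely in this limiting step rather than in the convexity: one must control the $O(1/n)$ slack in the Riccati inequality and, above all, rule out a rank-deficient limit $\bar P_\infty$. The latter I would handle by noting that $\phi$ diverges at the boundary of the cone while $R^{\srd}_{0,n}(D)$ remains bounded (being at most $\widehat R^{\srd}(D)+o(1)$), so the limit must be positive definite; the leftover boundary term $-\tfrac12\log\det P_n$ is bounded below using $P_n\preceq\Pi_n$ and the trace bound on $\Pi_n$. Combining the two inequalities yields $R^{\srd}(D)=\widehat R^{\srd}(D)$.
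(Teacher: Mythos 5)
Your proposal is correct and follows essentially the same route as the paper's Appendix~\ref{appendix:finite_srdf}: reduce the finite-horizon problem to the log-determinant program over the filtering error covariances, time-average and apply Jensen's inequality to the convex per-stage rate $\phi(P)=\tfrac12\log\det(APA\T+\Sigma_{\bw})-\tfrac12\log\det P$, and control the $O(1/n)$ boundary and Riccati-slack terms before passing to the limit (the paper packages this as Lemma~\ref{lemma:4} with the perturbed program $f(D;\epsilon_n,\delta_n)$, whereas you take subsequential limits of $\bar P_n$ directly, which is interchangeable). The only cosmetic difference is that you obtain convexity of $\phi$ by partial minimization of the SDP over $Q$, while the paper invokes convexity of $\log\det(P^{-1}+A\T\Sigma_{\bw}^{-1}A)$ directly.
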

\begin{proof}
See Appendix~\ref{appendix:finite_srdf}.
\end{proof}

Notice that while a semidefinite representation of $\widehat{R}^{\srd}(D)$ has been obtained in \cite{tanaka:2017}, no such expression is available for $R^{\srd}(D)$ in the literature. Hence, Theorem~\ref{theomain2} is a new result obtained in this paper for the first time. While we are not aware of an analytical expression of $R^{\srd}(D)$ for multidimensional Gauss-Markov processes, Theorem~\ref{theomain2} shows that $R^{\srd}(D)$ can be computed easily by semidefinite programming.
  
It is straightforward to verify that for scalar Gauss-Markov processes, the left hand side of \eqref{eq:SRDsdp} simplifies to \eqref{eq:SRDscalar}. This shows the correctness of \eqref{eq:SRDscalar}. Thus, the counterexample in the previous subsection implies that the formula \eqref{eq:SRDvec} reported in \cite{tatikonda:2004} is not correct.
% =====================================================
%
%
% Conclusions
%
%
% =====================================================

\section{Conclusions}\label{sec:conclusions}

\par We revisited the problem of computing the asymptotic limit of SRD function for time-invariant vector-valued Gauss-Markov sources subject to a per-letter MSE distortion, introduced in \cite{tatikonda:2004}. We showed, via a counterexample, that the closed form expression of the SRD function derived in \cite[eq. (17)]{tatikonda:2004}  using the dynamic reverse-waterfilling algorithm suggested in \cite[eq. (15)]{tatikonda:2004} is not correct even in the low distortion region. We also showed that the the SRD function is semidefinite representable and thus it can be computed numerically.

% =====================================================
%
%
% APPENDICES
%
%
% =====================================================

\appendices

\section{Proof of Lemma \ref{lemma:2}}\label{proof:lemma:2}

Suppose the minimizer process $\by_t$ defined by \eqref{linear:expression} is given. Construct a new process $\tilde{\by}_t$ by
\begin{subequations}
\begin{align}
\bq_t&=\bar{A}_t\bx_t+\bv_t, \label{eq:ytilde1}\\
\tilde{\by}_t&=\mathbb{E}(\bx_t|\bq^t), \label{eq:ytilde2}
\end{align}
\end{subequations}
where $\bv_t$ is the same random process as in \eqref{linear:expression}. Notice that $\tilde{\by}_t$ can be written in a recursive form as
\begin{subequations}
\begin{align}
\tilde{\by}_t&=A\tilde{\by}_{t-1}+L_t(\bq_t-\bar{A}_tA\tilde{\by}_{t-1}) \label{eq:kf1}\\
&=L_t\bar{A}_t\bx_t+(I-L_t\bar{A}_t)A\tilde{\by}_{t-1}+L_t\bv_t, \label{eq:kf2}
\end{align}
\end{subequations}
where $L_t$, $t=0,\ldots,n$ are the Kalman gains.

It is sufficient to show that 
\begin{align}
I(\bx^n; \by^n)&=I(\bx^n; \tilde{\by}^n),\text{ and} \label{eqmi} \\
\mathbb{E}\|\bx_t-\by_t\|^2&\geq \mathbb{E}\|\bx_t-\tilde{\by}_t\|^2,~~\forall t=0,\ldots,n. \label{eqmse}
\end{align}
First, we show \eqref{eqmi}.

\underline{Proof of \eqref{eqmi}}: Notice that
\begin{subequations}
\label{eqmixyytilde}
\begin{align}
I(\bx^n;\by^n)&\stackrel{(a)}=\sum_{t=0}^n I(\bx^t;\by_t|\by^{t-1}) \stackrel{(b)}=\sum_{t=0}^n I(\bx_t;\by_t|\by^{t-1}), \\
I(\bx^n;\tilde{\by}^n)&\stackrel{(c)}=\sum_{t=0}^n I(\bx^t;\tilde{\by}_t|\tilde{\by}^{t-1}) \stackrel{(d)}=\sum_{t=0}^n I(\bx_t;\tilde{\by}_t|\tilde{\by}^{t-1}).
\end{align}
\end{subequations}
Equalities (a) and (c) follow from the problem formulation \eqref{mutual:information}, (b) follows from \eqref{linear:expression}, and (d) follows from \eqref{eq:kf2}. Hence, it is sufficient to show that 
\begin{equation}
I(\bx_t;\by_t|\by^{t-1})=I(\bx_t;\tilde{\by}_t|\tilde{\by}^{t-1}), \;\; \forall t=0,\ldots,n,
\label{eqmixy}
\end{equation}
holds. By \eqref{linear:expression} and \eqref{eq:ytilde1}, we have $\by_t=\bq_t+\bar{B}_t\by^{t-1}$. Thus, for all $t=0,\ldots, n$, $\by^t$ and $\bq_t$ are related by an invertible linear map
\begin{equation}
\left[\begin{array}{cccc}
I_p & 0  & \cdots & 0\\
* & I_p & \ddots & \vdots\\
\vdots & \ddots & \ddots &0 \\
* & \cdots & * & I_p 
\end{array}\right]
\left[\begin{array}{c}
\by_0 \\
\vdots \\
\by_t
\end{array}\right] =
\left[\begin{array}{c}
\bq_0 \\
\vdots \\
\bq_t
\end{array}\right]. 
\label{eqinvertible}
\end{equation}
Thus, we have
\begin{align}
I(\bx_t;\by_t|\by^{t-1})
&=I(\bx_t;\bq_t+\bar{B}_t\by^{t-1}|\by^{t-1}) \nonumber \\
&=I(\bx_t;\bq_t|\by^{t-1}) \nonumber \\
&\stackrel{(e)}=I(\bx_t;\bq_t|\bq^{t-1}), \label{eq23}
\end{align}
where (e) holds since $\by^{t-1}$ and $\bq^{t-1}$ are related by an invertible map \eqref{eqinvertible}.
Since $\tilde{\by}_t$ is the output of the Kalman filter, we have the following conditional independence:
\begin{equation}
\tilde{\by}^{t} \leftrightarrow   \bq^{t}\leftrightarrow \bx_t, \;\;\;
\bq^{t} \leftrightarrow \tilde{\by}^{t} \leftrightarrow \bx_t.
\end{equation}

The first relationship holds since $\tilde{\by}^{t}$ is a deterministic function of $\bq^{t}$. The second relationship holds because of the orthogonality principle $\mathbb{E}\bq^t (\bx_t-\tilde{\by}_t){\T}=0$ (which, together with the Gaussian property, implies that $\bq^t$ and $\bx_t-\tilde{\by}_t$ are independent) of the minimum $\mse$. Similarly, we have
\begin{equation}
\tilde{\by}^{t-1} \leftrightarrow \bq^{t-1} \leftrightarrow \bx_t,  \;\;\; \bq^{t-1} \leftrightarrow   \tilde{\by}^{t-1}\leftrightarrow \bx_t.
\end{equation}
Thus, by the data processing inequality, we have
\begin{equation}
I(\bx_t;\bq^t)=I(\bx_t;\tilde{\by}^t) \text{ and } I(\bx_t;\bq^{t-1})=I(\bx_t;\tilde{\by}^{t-1}).
\label{eq13}
\end{equation}
Therefore, 
\begin{subequations}
\label{eq14}
\begin{align}
I(\bx_t;\bq_t|\bq^{t-1})&=I(\bx_t;\bq^t)-I(\bx_t;\bq^{t-1}) \\
&=I(\bx_t;\tilde{\by}^t)-I(\bx_t;\tilde{\by}^{t-1}) \label{eq14b} \\
&= I(\bx_t;\tilde{\by}_t|\tilde{\by}^{t-1}).
\end{align}
\end{subequations}
Equality \eqref{eq13} is used in step \eqref{eq14b}.
From \eqref{eq23} and \eqref{eq14}, we obtain \eqref{eqmixy}.\\
Next, we prove \eqref{eqmse}.

\underline{Proof of \eqref{eqmse}}:  Denote by $\sigma(\bq^t)$ the $\sigma$-algebra generated by the $\rv$ $\bq^t$.
Observe that
$\tilde{\by}_t$ is $\sigma(\bq^t)$-measurable, since it is the output of the Kalman filter \eqref{eq:kf1}.
Since $\tilde{\by}_t$ is the least $\mse$ estimate of $\bx_t$ given $\bq^t$, $\tilde{\by}_t$ is the minimizer of $\mse$ $\mathbb{E}\|\bx_t-\by_t'\|^2$ in the class of all $\sigma(\bq^t)$-measurable functions $\by_t'$.
However, because of the invertible relationship \eqref{eqinvertible}, $\by_t$ is also a $\sigma(\bq^t)$-measurable function.
Therefore, $\by_t$ cannot attain a strictly smaller $\mse$ than $\tilde{\by}_t$. Thus, we obtain \eqref{eqmse}.\\
This completes the proof.\qed

\section{Proof of Lemma \ref{lemma:3}}\label{proof:lemma:3}

\par If $R_{0,n}^{\srd}(D)<\infty$, then, matrices $\bar{A}_t$ and $\Sigma_{\bv_t}$ in \eqref{eq:xqy} must satisfy 
\begin{equation}
\label{eqnullspace}
\text{Im}(\bar{A}_t) \subseteq \text{Im}(\Sigma_{\bv_t}), \; \forall t=0,\ldots,n.
\end{equation}
(Otherwise there exists a subspace component of $\bq_t$ that deterministically depends on $\bx_t$, implying \eqref{eq23} is unbounded.)
Let  
\[
\Sigma_{\bv_t}=\left[\begin{array}{cc}\bu_{1,t} & \bu_{2,t}\end{array}\right]
\left[\begin{array}{cc}\Sigma_{\bv_t} &0 \\
0 & 0 \end{array}\right]
\left[\begin{array}{c}\bu_{1,t}^{\T} \\ \bu_{2,t}^{\T} \end{array}\right],
\]
be the singular value decomposition such that ${\bu}_t\triangleq\left[\begin{array}{cc}\bu_{1,t} & \bu_{2,t}\end{array}\right]$ is an orthonormal matrix and $\Sigma_{\bv_t} \succ 0$. By \eqref{eqnullspace}, we have $\bu_{2,t}^{\T} \bar{A}_t=0$. Now, if we set $E_t\triangleq \bu_{1,t}^{\T} \bar{A}_t$, $\bz_t\triangleq \bu_{1,t}^{\T} \bv_t$, and ${\bf p}_t \triangleq E_t\bx_t+\bz_t$, it is easy to check that $\bz_t\sim N(0, \Sigma_{\bz_t})$ and
\begin{equation}
\label{equqp}
\bu_t^{\T} \bq_t=\left[\begin{array}{c}\bu_{1,t}^{\T} \\ \bu_{2,t}^{\T} \end{array}\right] \bq_t= \left[\begin{array}{c}{\bf p}_t \\ 0 \end{array}\right].
\end{equation}
Moreover, $\by_t$ defined by \eqref{eq:xqy} can be written as
\begin{align*}
\by_t&= \mathbb{E}(\bx_t|\bq^t) \\
&\stackrel{(a)}=\mathbb{E}(\bx_t|\bu_k^{\T}\bq_k: k=0,\ldots,t) \\
&\stackrel{(b)}= \mathbb{E}(\bx_t|{\bf p}^t),
\end{align*}
where equality $(a)$ holds because $\bu_t$ is invertible, and $(b)$ is due to \eqref{equqp}. This completes the proof. \qed

\section{Proof of Theorem~\ref{theomain2}}\label{appendix:finite_srdf}

\par Notice that the process $\by_t$ in Lemma \ref{lemma:3}, \eqref{eqpy} can be recursively computed by the Kalman filter
\begin{align*}
&\by_t=\by_{t|t-1}+P_{t|t-1}E_t\T(E_t P_{t|t-1}E_t\T+\Sigma_{\bz_t})^{-1}({\bf p}_t-E_t\by_{t|t-1}) \\
&\by_{t|t-1}= A\by_{t-1},
\end{align*}
where $P_{t|t-1}$ is the solution to the Riccati recursion
\begin{align*}
P_{t|t-1}&=AP_{t-1|t-1}A{\T}+\Sigma_{\bw} \\
P_{t|t}&=(P_{t|t-1}^{-1}+E_t\T \Sigma_{\bz_t}^{-1}E_t)^{-1}.
\end{align*}
Since $P_{t|t-1}$ and $P_{t|t}$ can be interpreted as $\mse$ covariance matrices, we have
\begin{subequations}
\label{eqmilog}
\begin{align}
&I(\bx_t;\bq_t|\bq^{t-1}) \nonumber\\
&\stackrel{(a)}=I(\bx_t;{\bf p}_t|{\bf p}^{t-1}) \\
&=h(\bx_t|{\bf p}^{t-1})-h(\bx_t|{\bf p}^t) \\
&=\frac{1}{2}\log\det (AP_{t-1|t-1}A{\T}+\Sigma_{\bw})-\frac{1}{2}\log\det P_{t|t},
\end{align}
\end{subequations}
where (a) is due to \eqref{equqp}. Combining \eqref{eqmixyytilde}, \eqref{eq23} and \eqref{eqmilog}, we have shown that $I(\bx^n;\by^n)$ can be written using variables $P_{t|t}$, $t=0,\ldots, n$. Since we can also write $\mathbb{E}\|\bx_t-\by_t\|^2=\trace({P}_{t|t})$, the finite-time horizon Gaussian $\srd$ problem \eqref{prob_stat1} can be written as a non-convex optimization problem in terms of variables $P_{t|t}, E_t, \Sigma_{\bz_t}$, $t=0,\ldots, n$. Nevertheless, by employing the variable elimination technique discussed of \cite[Section IV]{tanaka:2017}, we can show that \eqref{prob_stat1} is semidefinite representable as follows.
\begin{subequations}
\label{eqsrdtv}
\begin{align}
&R_{0,n}^{\srd}(D)= \min_{\substack{P_{t|t}\\t=0,\ldots,n}} \quad \tfrac{1}{n+1} \biggl[ 
\tfrac{1}{2}\log\det \Sigma_{\bx_0}-\tfrac{1}{2}\log\det P_{0|0} \biggr. \nonumber \\
&\biggl. +\sum_{t=1}^n \left( \tfrac{1}{2}\log\det (AP_{t-1|t-1}A^{\T} +\Sigma_{\bw})-\tfrac{1}{2}\log\det P_{t|t} \right)
\biggr].  \label{eqsrdtv1}\\
&\text{s.t.} \quad 0 \prec P_{t|t} \preceq AP_{t-1|t-1}A^{\T} +\Sigma_{\bw},~ t=1, \ldots, n \label{eqsrdtv2}\\
& \hspace{5ex} 0 \prec P_{0|0} \preceq \Sigma_{\bx_0} \label{eqsrdtv3}\\
& \hspace{5ex} \trace(P_{t|t})\leq D,~t=0,\ldots, n \label{eqsrdtv4}
\end{align}
\end{subequations}
This can be reformulated as a convex optimization problem in terms of $\{P_{t|t}, Q_t:~t=0,\ldots,n\}$:
\begin{align}
&R_{0,n}^{\srd}(D)=\min -\tfrac{1}{n+1}\left(\sum_{t=0}^n \frac{1}{2}\log\det Q_t + c\right), \label{eqsdptv}\\
&\text{s.t. } Q_t\succ 0, \trace(P_{t|t})\leq D ,~t=0,\ldots,n\nonumber  \\
& P_{t|t} \preceq AP_{t-1|t-1}A{\T} +\Sigma_{\bw},~ t=1, \ldots, n\nonumber\\
& P_{0|0}\preceq \Sigma_{\bx_0},~P_{n|n}=Q_n \nonumber \\
& \left[\begin{array}{cc}
P_{t|t}-Q_t & P_{t|t}A{\T} \\
AP_{t|t} & AP_{t|t}A{\T}+\Sigma_{\bw}
\end{array}\right] \succeq 0,~~t=0,\ldots,n-1\nonumber 
\end{align}
Here, $c$ is a constant given by
\[
c=\frac{1}{2}\log\det \Sigma_{\bx_0}+\frac{n}{2}\log\det \Sigma_{\bw}.
\]
Next, we make a few observations regarding \eqref{eqsdptv}. First, \eqref{eqsdptv} is in the form of determinant-maximization problem \cite{vandenberghe1998determinant}. Therefore, standard $\sdp$ solvers can be used to solve it numerically. Second, once the optimal solution $P_{t|t}$, $t=0,\ldots, n$ of \eqref{eqsdptv} is found, the minimizer process 
\eqref{eqpy} for the Gaussian $\srd$ problem can be constructed by arbitrarily choosing matrices $E_t$ and $\Sigma_{\bz_t}\succ 0$ satisfying
\begin{equation}
\label{eqesigmav}
E_t\T \Sigma_{\bz_t}^{-1}E_t=P_{t|t}^{-1}-(AP_{t-1|t-1}A{\T}+\Sigma_{\bw})^{-1}.
\end{equation}
Since the rank of the $\rhs$ of \eqref{eqesigmav} can be different for each $t$, the size of the matrix $\Sigma_{\bz_t}$ is also different for each $t$. Thus, the dimension of the random vector ${\bf p}_t$ in \eqref{eqpy} is in general time-varying.

\par Next, we prove the following lemma.

\begin{lemma}{\ \\}\label{lemma:4}
There exist non-negative sequences $\{\epsilon_n\}$ and $\{\delta_n\}$ such that $\epsilon_n \searrow 0$ and $\delta_n \searrow 0$ as $n\rightarrow \infty$, and
\begin{align}
R_{0,n}^{\srd}(D) \geq f(D; \epsilon_n, \delta_n),\label{useful_inequality}
\end{align}
where
\begin{subequations}
\label{eqdeff}
\begin{align}
& f(D;\epsilon_n, \delta_n) \triangleq  \nonumber \\
& \min_P \quad \frac{1}{2}\log\det(APA{\T} + \Sigma_{\bw})-\frac{1}{2}\log\det P -\epsilon_n. \label{eqdeff1}\\
& \text{s.t.} \quad 0\prec P \preceq APA{\T}+\Sigma_{\bw}+\delta_n I_p \label{eqdeff2} \\
&\hspace{5ex} \trace(P)\leq D \label{eqdeff3}
\end{align}
\end{subequations}
\end{lemma}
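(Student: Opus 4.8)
The plan is to take an optimal sequence $\{P_{t|t}^\star\}_{t=0}^n$ for the semidefinite program \eqref{eqsrdtv} and show that its time-average $\bar P\triangleq\frac{1}{n+1}\sum_{t=0}^n P_{t|t}^\star$ is a feasible point of \eqref{eqdeff} whose objective value lower-bounds $R_{0,n}^{\srd}(D)$ up to vanishing boundary corrections. Introducing the per-step function $h(P)\triangleq\tfrac12\log\det(APA\T+\Sigma_{\bw})-\tfrac12\log\det P$, a reindexing of the telescoping sum in \eqref{eqsrdtv1} gives the exact identity
\[
(n+1)R_{0,n}^{\srd}(D)=\sum_{t=0}^n h(P_{t|t}^\star)+\tfrac12\log\det\Sigma_{\bx_0}-\tfrac12\log\det(AP_{n|n}^\star A\T+\Sigma_{\bw}).
\]
Because $\trace(P_{n|n}^\star)\le D$ forces $P_{n|n}^\star\preceq D\,I_p$, the last term is sandwiched, $\log\det\Sigma_{\bw}\le\log\det(AP_{n|n}^\star A\T+\Sigma_{\bw})\le\log\det(DAA\T+\Sigma_{\bw})$, so the two boundary terms are $O(1)$ uniformly in the optimizer. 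This yields $R_{0,n}^{\srd}(D)\ge\frac{1}{n+1}\sum_{t=0}^n h(P_{t|t}^\star)-\epsilon_n$ for an explicit $\epsilon_n\searrow0$.

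Next I would establish feasibility of $\bar P$ by averaging the constraints. The trace bound \eqref{eqsrdtv4} gives $\trace(\bar P)\le D$, and $\bar P\succ0$ as an average of positive-definite matrices. Summing \eqref{eqsrdtv2} over $t=1,\ldots,n$ and substituting $\sum_{t=1}^nP_{t|t}^\star=(n+1)\bar P-P_{0|0}^\star$ and $\sum_{t=0}^{n-1}P_{t|t}^\star=(n+1)\bar P-P_{n|n}^\star$ produces
\[
\bar P\preceq A\bar P A\T+\tfrac{n}{n+1}\Sigma_{\bw}+\tfrac{1}{n+1}\bigl(P_{0|0}^\star-AP_{n|n}^\star A\T\bigr).
\]
Using $\tfrac{n}{n+1}\Sigma_{\bw}\preceq\Sigma_{\bw}$, $AP_{n|n}^\star A\T\succeq0$, and $P_{0|0}^\star\preceq\Sigma_{\bx_0}\preceq\lambda_{\max}(\Sigma_{\bx_0})I_p$ from \eqref{eqsrdtv3}, this simplifies to $\bar P\preceq A\bar P A\T+\Sigma_{\bw}+\delta_nI_p$ with $\delta_n\triangleq\lambda_{\max}(\Sigma_{\bx_0})/(n+1)\searrow0$, which is precisely \eqref{eqdeff2}. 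Hence $\bar P$ is feasible for the program defining $f(D;\epsilon_n,\delta_n)$.

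The final and most delicate step is to pass from the average $\frac{1}{n+1}\sum_t h(P_{t|t}^\star)$ to $h(\bar P)$ via Jensen's inequality, which is valid only if $h$ is convex on the positive-definite cone; I expect this to be the main obstacle. I would prove convexity by a second-derivative computation along $P(s)=P_0+sV$ (with $V$ symmetric): the Hessian of $2h$ in the direction $V$ equals $\trace(P_0^{-1}VP_0^{-1}V)-\trace(WVWV)$, where $W\triangleq A\T(AP_0A\T+\Sigma_{\bw})^{-1}A$. A Schur-complement argument applied to $\left[\begin{smallmatrix}P_0^{-1}&A\T\\ A& AP_0A\T+\Sigma_{\bw}\end{smallmatrix}\right]$, which is positive semidefinite because its complement relative to the block $P_0^{-1}$ reduces to $\Sigma_{\bw}\succeq0$, gives $0\preceq W\preceq P_0^{-1}$; combined with the monotonicity fact that $0\preceq Y\preceq X$ implies $\trace(XVXV)\ge\trace(YVYV)$ (proved by differentiating $t\mapsto\trace(Z_tVZ_tV)$ along the segment $Z_t=(1-t)Y+tX$ and using that the trace of a product of two positive-semidefinite matrices is nonnegative), this forces the Hessian to be nonnegative, so $h$ is convex. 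With convexity in hand, Jensen yields $\frac{1}{n+1}\sum_t h(P_{t|t}^\star)\ge h(\bar P)$, and since $\bar P$ is feasible we have $h(\bar P)\ge\min_P h(P)$ over the feasible set, so $R_{0,n}^{\srd}(D)\ge h(\bar P)-\epsilon_n\ge f(D;\epsilon_n,\delta_n)$, which completes the proof.
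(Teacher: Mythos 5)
Your proposal is correct and follows essentially the same route as the paper's proof: take the optimal $\{P_{t|t}\}$ of the finite-horizon SDP, average it over time, verify feasibility of the average with a vanishing slack $\delta_n$ coming from the $P_{0|0}\preceq\Sigma_{\bx_0}$ term, absorb the telescoping boundary terms into $\epsilon_n$ via the trace constraint, and apply Jensen's inequality to the per-step cost. The only substantive difference is that you prove convexity of $P\mapsto\frac{1}{2}\log\det(APA\T+\Sigma_{\bw})-\frac{1}{2}\log\det P$ from scratch via a Hessian and Schur-complement computation, whereas the paper rewrites this function as $\frac{1}{2}\log\det\Sigma_{\bw}+\frac{1}{2}\log\det(P^{-1}+A\T\Sigma_{\bw}^{-1}A)$ and invokes the known convexity of the latter.
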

\begin{proof}
\par Let $\{P_{t|t}\}_{t=0}^n$ be the minimizer sequence for \eqref{eqsrdtv}, and define $P\triangleq \frac{1}{n+1}\sum_{t=0}^n P_{t|t}$.
We first show that there exists a sequence $\delta_n \searrow 0$ such that \eqref{eqdeff2} holds for each $n$. From \eqref{eqsrdtv2}, we have 
\[ P \preceq APA{\T} +\Sigma_{\bw}+\frac{1}{n+1} P_{0|0}.\]
Thus, \eqref{eqdeff2} is feasible with a choice $\delta_n=\sigma_{\text{max}}(\frac{1}{n+1}P_{0|0})$ (the maximum singular value of $\frac{1}{n+1}P_{0|0}$).

\par Next, we show that there exists a sequence $\epsilon_n \searrow 0$ such that for each $n$ the objective function  \eqref{eqdeff1} is a lower bound of the objective function \eqref{eqsrdtv1}. Notice that \eqref{eqsrdtv1} without the minimization can be written as follows.
\begin{subequations}
\label{eqregroup}
\begin{align}
&\text{\eqref{eqsrdtv1}}=\tfrac{1}{n+1}\biggl[\tfrac{1}{2}\log\det \Sigma_{\bx_0}-\tfrac{1}{2}\log\det P_{0|0} \biggr. \nonumber\\
&+\biggl. \sum_{t=1}^n\left(\tfrac{1}{2}\log\det(AP_{t-1|t-1}A{\T}+\Sigma_{\bw})-\tfrac{1}{2}\log\det P_{t|t} \right) \biggr] \nonumber\\
&=\tfrac{1}{n+1}\biggl[\tfrac{1}{2}\log\det \Sigma_{\bx_0}-\tfrac{1}{2}\log\det(AP_{n|n}A{\T}+\Sigma_{\bw}) \biggr. \nonumber\\
&+\biggl. \sum_{t=0}^n\left(\tfrac{1}{2}\log\det(AP_{t|t}A{\T}+\Sigma_{\bw})-\tfrac{1}{2}\log\det P_{t|t} \right) \biggr] \nonumber\\
&=\tfrac{1}{n+1}\biggl(\tfrac{1}{2}\log\det \Sigma_{\bx_0}-\tfrac{1}{2}\log\det(AP_{n|n}A{\T}+\Sigma_{\bw}) \biggr) \label{eqregroup1}\\
&+\tfrac{1}{n+1} \sum_{t=0}^n\left(\tfrac{1}{2}\log\det \Sigma_{\bw}-\tfrac{1}{2}\log\det (P_{t|t}^{-1}+A{\T} \Sigma_{\bw}^{-1}A) \right). \label{eqregroup2}
\end{align}
\end{subequations}
Using the identity $\det X \leq \trace((X)/p)^p)$ for general $X\in \mathbb{S}_{++}^p$,
\begin{align*}
\det({AP_{n|n}A{\T}+\Sigma_{\bw}})&\leq \left(\frac{\trace(AP_{n|n}A{\T})+\trace(\Sigma_{\bw})}{p}\right)^p \\
&\leq \left(\frac{\sigma_{\text{max}}(AA{\T})D+\trace(\Sigma_{\bw})}{p}\right)^p.\end{align*}
Hence, there exists a positive constant $\gamma$ such that 
\begin{align}
\text{\eqref{eqregroup1}}\geq &\frac{1}{n+1}\Bigg(\frac{1}{2}\log\det \Sigma_{\bx_0}\nonumber\\
&-\frac{1}{2}\log\left(\frac{\sigma_{\text{max}}(AA{\T})D+\trace(\Sigma_{\bw})}{p}\right)^p
\Bigg) \nonumber \\
&\geq -\frac{1}{n+1}\gamma \nonumber \\
&=-\epsilon_n. \label{eq10}
\end{align}
 \par In the last line, we defined $\epsilon_n\triangleq \frac{1}{n+1}\gamma$. Moreover, \eqref{eqregroup2} is lower bounded as follows:
\begin{align}
\text{\eqref{eqregroup2}}&\stackrel{(a)}\geq \frac{1}{2}\log\det \Sigma_{\bw}+\frac{1}{2}\log\det(P^{-1}+A{\T} \Sigma_{\bw}^{-1}A) \nonumber \\
&=\frac{1}{2}\log\det (APA{\T}+\Sigma_{\bw})-\frac{1}{2}\log\det P,
\label{eq11}
\end{align}
where (a) follows from the fact that $\log\det({P^{-1}+A{\T} \Sigma_{\bw}^{-1}A})$ is convex in $P$, and Jensen's inequality \cite[Theorem 2.6.2]{cover-thomas2006}. 
Moreover, from \eqref{eq10} and \eqref{eq11}, we have 
\begin{align}
\text{\eqref{eqsrdtv1}}\geq \frac{1}{2}\log\det(APA{\T}+\Sigma_{\bw})-\frac{1}{2}\log\det P-\epsilon_n,\label{inequality}
\end{align}
which gives the desired inequality. This completes the proof.
\end{proof}
\par Suppose the conditions of Lemma \ref{lemma:4} hold. Then, by taking the limit in both sides of \eqref{useful_inequality} we obtain $R^{\srd}(D)\geq\lim_{n\longrightarrow\infty}{f}(D;,\epsilon_n,\delta_n)$. However, $\lim_{n\longrightarrow\infty}{f}(D;,\epsilon_n,\delta_n)=\widehat{R}^{\srd}(D)$. This implies that $\widehat{R}^{\srd}(D)\leq{R}^{\srd}(D)$.
\par The converse inequality, i.e., $\widehat{R}^{\srd}(D)\geq{R}^{\srd}(D)$,  holds in general, however, it can be shown following the steps of \cite[Section IV]{tanaka2015semidefinite}. Hence, we omit it. This completes the proof. \qed

% =====================================================
%
%
% ACKNOWLEDGEMENT
%
%
% =====================================================
\section*{Acknowledgement}
\label{sec:acknowledgement}

The authors would like to thank Prof. M. Skoglund, Prof. J. $\O$stergaard and Prof. C. D. Charalambous for fruitful discussions.

\bibliographystyle{IEEEtran}
\bibliography{string,references}

% =====================================================
%
%
% That's all folks! ;-)
%
%
% =====================================================
\end{document}